\documentclass[runningheads,a4paper]{llncs}\usepackage{latexsym}
\usepackage{graphicx}
\usepackage{amsmath}
\usepackage{amssymb,bbm,color}
\usepackage{color,soul}
\usepackage{enumitem}
\usepackage{csquotes}
\usepackage{tikz}
\usetikzlibrary{calc,decorations.pathreplacing}
\usepackage{ctable}

\usepackage[pdftex,colorlinks=true,linkcolor=blue,unicode,pdfpagelabels]{hyperref}



\let\RE\Re
\let\Re=\undefined
\DeclareMathOperator{\Re}{\RE e}
\let\IM\Im
\let\Im=\undefined
\DeclareMathOperator{\Im}{\IM m}

\newcommand{\R}{\mathbbm R}

\newcommand{\N}{\mathbbm N}
\newcommand{\Z}{\mathbbm Z}

\newcommand{\ve}{\varepsilon}
\newcommand{\argmin}{\text{arg min}}

\newcommand{\ud}{u^\delta}

\newcommand{\abs}[1]{\left| #1 \right|}
\newcommand{\set}[1]{\left\{ #1 \right\}}
\newcommand{\norm}[1]{\left\| #1 \right\|}

\newtheorem{thm}{Theorem}

\newtheorem{rem}[thm]{Remark}




\urldef{\mailsa}\path|{guozhi.dong,otmar.scherzer}@univie.ac.at|

\begin{document}
\title{Nonlinear Flows for Displacement Correction and Applications in Tomography}

\titlerunning{Nonlinear Flows for Displacement Correction}
\author{Guozhi Dong\inst{1}  \and Otmar Scherzer \inst{1,2}
}
\authorrunning{G. Dong, and O. Scherzer}

\institute{Computational Science Center,\\ University of Vienna,\\
Oskar-Morgenstern-Platz 1, 1090 Wien, Austria\\
\mailsa\\
\and
Johann Radon Institute for Computational and\\Applied Mathematics (RICAM),\\ Austrian Academy of Sciences,\\
Altenbergerstrasse 69, A-4040 Linz, Austria
}


\maketitle

\begin{abstract}
In this paper we derive nonlinear evolution equations associated 
with a class of non-convex energy functionals which can be used for 
correcting displacement errors in imaging data.
We study properties of these filtering flows 
and provide experiments for correcting angular perturbations 
in tomographical data.
\end{abstract}
\keywords{Non-convex regularization, nonlinear flow, displacement correction, Radon transform, 
angular perturbation}

\section{Introduction}
In this paper, we are investigating variational methods and partial differential equations for filtering displacement errors 
in imaging data. Such types of errors appear when measurement data are \emph{sampled} erroneously. 
In this work we consider measurement data $\ud$, which are considered perturbations of 
an ideal function $u$ at random sampling locations $(x_1+d_i(x_i),x_2)$: That is, 
\begin{equation}
\label{eq:dis_function}
\ud(\mathbf{x})=u(x_1+d_i(x_i),x_2), \text{ for } \mathbf{x}=(x_1,x_2)\in  \R^2.
\end{equation}
A particular case of a displacement error $x_1 \to d_1(x_1)$ appears in Computerized Tomography 
(here the $x_1$-component denotes the $X$-ray beam direction ($\theta$ below)) when the angular sampling 
is considered erroneous. In this case the recorded data are
\begin{equation}
\label{eq:angle_dis}
y^\delta (\theta,l)=R[f](\theta+d_1(\theta),l).
\end{equation}
Here $R[f]$ denotes the \emph{Radon transform} or \emph{X-ray transform} of the function $f$, 
and $\theta$ and $l$ denote the beam direction and beam distance, respectively.

Displacement errors of the form $d_2(x_2)$ are jittering errors, and the filtering and compensation 
of such has been considered in \cite{She04,KanShe06,Nik09p,Nik09,DonPatSchOek15}.

Our work is motivated by \cite{LenSch11,LenSch09}, where partial differential equations 
for denoising image data destructed by \emph{general} sampling errors of the form
\[\ud(\mathbf{x})=u(\mathbf{x}+\mathbf{d}) 
\text{ with } \mathbf{d}:\R^2 \rightarrow \R^2,\]
have been stated. The nonlinear evolution equations have been derived by 
mimicking a convex semi-group for a non-convex energy. The PDEs from \cite{LenSch11,LenSch09} 
revealed properties similar to the mean curvature motion equation \cite{EvaSpr91,EvaSpr92}. 
In comparison to \cite{LenSch11,LenSch09}, here, we are considering displacement errors in the 
$x_1$-component only.

The paper is organized as follows:
In Section \ref{sec:regularization}, we review the state of the art of non-convex regularization models for 
sampling error corrections: 
In particular, we comment on algorithms for recovering different types of displacements in a discrete setting.
In Section \ref{sec:flows}, we consider nonlinear filtering flows motivated from non-convex regularization energies.
For these flows we present numerical experiments, which suggest particular properties of the PDEs.
Finally, we present an application of the novel filtering techniques for correcting tomographical image data 
with errors in the beam directions.

\section{Non-convex regularization models}
\label{sec:regularization}
We study the following problem: Let $i=1$ or $2$ be fixed.
Given noisy image data $\ud$, the goal is to simultaneously recover the ideal function $u$ \text{ and } the 
displacement $d_i$ satisfying \eqref{eq:dis_function}. 
Figure \ref{fig:displacement} shows the effect of displacement errors $d_1$ and $d_2$ on some image data, respectively. 
The two displacement errors result in image data with orthogonal visual perturbations.
\begin{figure}[bht]
\centering
\includegraphics[width=0.45\textwidth]{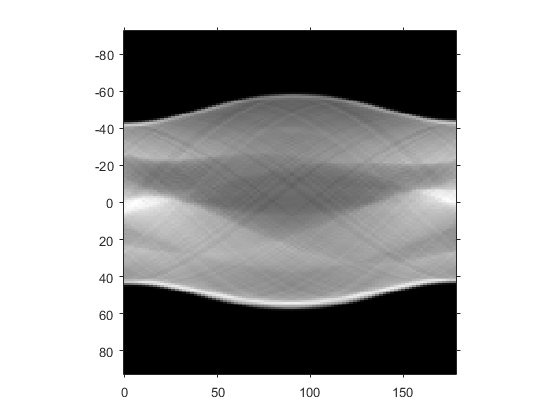}  \includegraphics[width=0.45\textwidth]{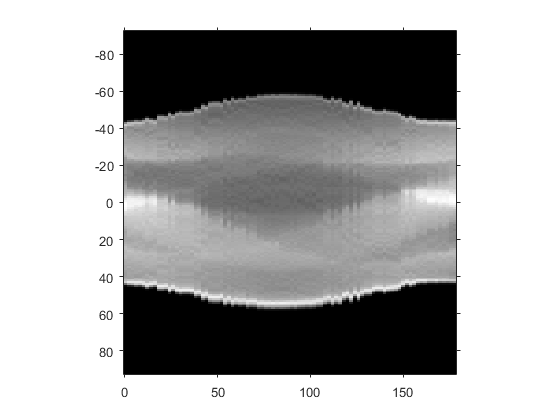}  \\
\includegraphics[width=0.45\textwidth]{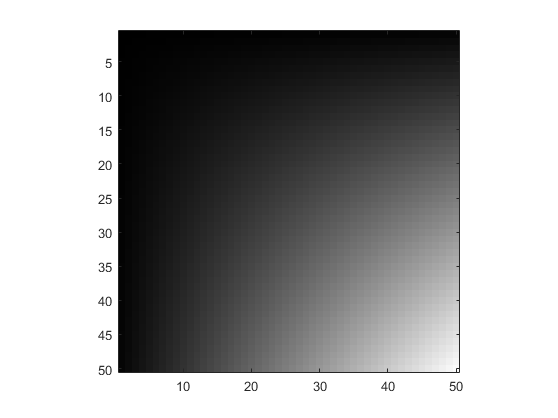}  \includegraphics[width=0.45\textwidth]{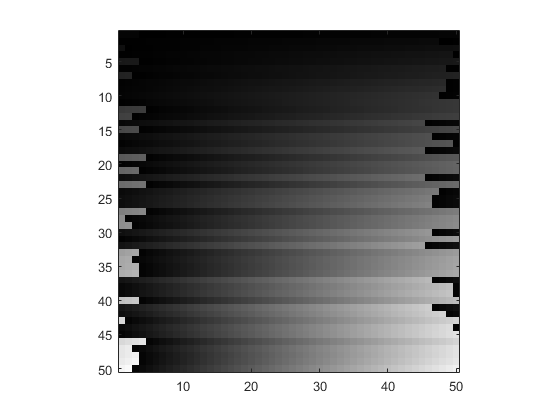} 
\caption{\sl{Top:} Non-disturbed and angular disturbed sinogram of the Shepp-Logan phantom (in the notation of the Radon transform 
         $x_1=\theta$ and $x_2=l$). \sl{Bottom:} Original and jittered image. The two different kinds of errors 
         results appear as visually complementary data perturbations. This observation is the basis of regularization methods 
         \eqref{eq:non_convex}, 
         where jittering is corrected by regularizing in $x_2$ direction and tomographic problems are corrected by filtering 
         in $x_1$ direction.}
\label{fig:displacement}
\end{figure}

To this end we consider the following optimization problem:
\begin{equation}
\label{eq:problem1}
\begin{array}{lll}
& \min_{d_i}& \mathcal{J}_i(d_i|\ud):=\norm{\partial_i^k \ud(x_1-d_i(x_i),x_2)},\\
& \text{ such that }& \abs{d_i}\leq M.
\end{array}
\end{equation}
Here $\norm{\cdot}$ denotes some proper quasi-norm or norm of functions $v : \R^2 \to \R$ and the choice of 
$k \in \N$ depends on the a-priori information on regularity of the function $u$.

Below, we are considering \emph{discrete} and \emph{continuous} optimization formulations of Problem \eqref{eq:problem1}.
\begin{itemize}
 \item In the discrete optimization problem, $d_i(x)  \in \Z$ denotes the pixel displacements of columns 
($i=1$) and rows ($i=2$), respectively. The image data $u$ can be represented as a matrix with non-negative 
integer values, that is $u \in \N_0^{l \times n}$, where each matrix entry denotes the discrete image intensity 
of the image data at a position $(c,r) \in \set{1,\ldots,l} \times \set{1,\ldots,n}$.
Moreover, the derivatives are considered in the sense of finite differences.
\item In a continuous formulation, $d_i \in \R$ and the image is considered a function $u: \R^2 \to \R$.
\end{itemize}

For $i=2$, the discrete optimization problem \eqref{eq:problem1} has been investigated in \cite{Nik09p,Nik09} 
and the continuous optimization problem has been studied in \cite{She04,DonPatSchOek15}.

\subsection{Discrete optimization algorithms}
The computational complexity of the discrete optimization algorithms varies significantly for solving the discrete Problem \eqref{eq:problem1} 
with $i=1$ and $i=2$.
\begin{itemize}
\item Nikolova \cite{Nik09p,Nik09} introduced a highly efficient optimization algorithm with exhaustive search for the case $i=2$.
      The complexity is $\mathcal{O}(M n)$ for $u \in \N_0^{l \times n}$. 
\item For $i=1$ the discrete optimization problem \eqref{eq:problem1} is an \emph{assignment} problem. Even the 
      simplified problem of alignment of the columns in each of the $\lfloor \frac{l}{M} \rfloor$ non-intersecting 
      sub-blocks has already an exponential complexity $\mathcal{O}((M!)^{\lfloor \frac{l}{M} \rfloor}) $ by exhaustive search.
 
      We note that the complexity of assignment problems depends on the properties of the given cost functionals.
      For linear assignment problem (see e.g. \cite{BurDelMar09}), the Hungarian algorithm \cite{Mun57} has a complexity 
      $\mathcal{O}(l^3)$. However, nonlinear assignment problems \cite{Vos00}, such as Problem \eqref{eq:problem1}, are 
      usually \emph{NP-hard}.
\end{itemize} 

\subsection{Continuous models}
The discrete optimization algorithm for solving Problem \eqref{eq:problem1} can be used to correct for large displacement errors.
Small (including subpixel) displacement errors can be corrected for by using partial differential equations: If the displacement $d_i$ 
is small, following \cite{LenSch11,DonPatSchOek15}, we consider a first order approximation of the continuous data 
$u: \R^2 \to \R$. Then
\begin{equation}
\label{eq:taylor}
   \ud(\mathbf{x}) = u(x_1+d_i(x_i),x_2)\approx u(\mathbf{x}) + d_i(x_i)\partial_1 u(\mathbf{x}) ,
\end{equation}
such that
\begin{equation}
\label{eq:disp}
   d_i(x_i) \approx \frac{\ud(\mathbf{x})-u(\mathbf{x})}{\partial_1 u(\mathbf{x})}.
\end{equation}
We aim for simultaneous minimizing the displacement error $d_i$ and maximizing the smoothness of 
$u$ by the minimization of the functional
\begin{equation}
\label{eq:non_convex}
\mathcal{F}(u;\ud) :=  \underbrace{\frac{1}{2}\int_{\R^2}
                           \frac{(u(\mathbf{x})-\ud(\mathbf{x}))^2}{(\partial_1 u(\mathbf{x}))^2}d\mathbf{x}}_{=:\mathcal{D}_2(u,\ud)}
                           +\alpha  
                           \underbrace{\frac{1}{p} \int_{\R^2}(\partial_i^k u(\mathbf{x}))^p d\mathbf{x}}_{=:\mathcal{R}_{i,k,p}(u)},
\end{equation}
with some fixed parameter $\alpha >0$. Our particular choice of the regularization functional is motivated from the 
structure of the data (see Figure \ref{fig:displacement}), where we observed that correcting for line jittering requires 
regularization in $x_2$-direction 
and angular displacements require regularization in $x_1$-direction.

The functional $\mathcal{F}$ is non-convex with respect to $u$ and has a singularity when $\partial_1 u$ vanishes. 
For the practical minimization we consider an approximation consisting of a sequence of convex minimization problems:
\begin{equation}
\left\{
\begin{aligned}
\label{eq:lagged_ite}
u_0 & =  \ud,\\
u_m &:=  \argmin_{u} \mathcal{F}_\ve^c(u;u_{m-1}) \text{ for all } m \in \N\,,
\end{aligned}
\right.
\end{equation}
where $\epsilon>0$ is a small real number, and $\set{\mathcal{F}_\ve^c(\cdot;u_{m-1})}_{m \in \N}$ is the set of convex functionals defined by 
\begin{equation}
\label{eq:convex_eps}
\mathcal{F}^c_\ve(u;u_{m-1}) := 
\frac{1}{2} \int_{\R^2}\frac{(u(\mathbf{x})-u_{m-1}(\mathbf{x}))^2}
                            {(\partial_1 u_{m-1}(\mathbf{x}))^2+\ve}d\mathbf{x} + \alpha \mathcal{R}_{i,k,p}(u).
\end{equation}

In the following we give a convergence result inspired from \cite{ChaMul99}.
\begin{thm}
Let $p=1,2$, $k=1,2$, $\ve >0$,
and let $\set{u_m}_{m \in \N}$ be the sequence of minimizers from \eqref{eq:lagged_ite}. 
\begin{itemize}
\item[1.] The sequences $(\mathcal{F}^c_\ve(u_m,u_{m-1}))_{m \in \N}$ and $(\mathcal{R}_{i,k,p}(u_m))_{m \in \N}$ both are monotonically decreasing.
\item[2.] If there exist a constant $C > 0$ and if $\sup \set{\norm{\partial_1 u_m}_{L^\infty}: m \in \N_0} = C < \infty$, 
          then 
          \[\norm{u_m-u_{m-1}}_{L^2}\rightarrow 0,\;\text{ as }\; m\rightarrow \infty\;.\]
\end{itemize}
\end{thm}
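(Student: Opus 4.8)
The plan is to exploit the classical descent-type structure of the lagged-diffusivity / half-quadratic iteration in the spirit of Chan–Mulet. The two claimed facts are, respectively, a monotonicity statement (no extra hypothesis needed) and a summability-type statement (needing the uniform bound on $\partial_1 u_m$). Throughout, write $w_{m-1}(\mathbf{x}) := (\partial_1 u_{m-1}(\mathbf{x}))^2 + \varepsilon > 0$ for the (strictly positive) weight, so that
\[
\mathcal{F}^c_\ve(u;u_{m-1}) = \frac12 \int_{\R^2} \frac{(u-u_{m-1})^2}{w_{m-1}}\,d\mathbf{x} + \alpha\,\mathcal{R}_{i,k,p}(u),
\]
which is a genuinely convex functional of $u$ for the admissible $(p,k)$, so the minimizer $u_m$ exists and the first-order variational inequality holds.

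\textbf{Step 1 (monotonicity of $\mathcal{F}^c_\ve$).} Since $u_m$ minimizes $\mathcal{F}^c_\ve(\cdot;u_{m-1})$ and $u_{m-1}$ is a competitor, $\mathcal{F}^c_\ve(u_m;u_{m-1}) \le \mathcal{F}^c_\ve(u_{m-1};u_{m-1})$. But the data term in $\mathcal{F}^c_\ve(u_{m-1};u_{m-1})$ vanishes, so $\mathcal{F}^c_\ve(u_{m-1};u_{m-1}) = \alpha\,\mathcal{R}_{i,k,p}(u_{m-1})$. On the other hand $\mathcal{F}^c_\ve(u_m;u_{m-1}) \ge \alpha\,\mathcal{R}_{i,k,p}(u_m)$ because the data term is nonnegative. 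Hence
\[
\alpha\,\mathcal{R}_{i,k,p}(u_m) \le \mathcal{F}^c_\ve(u_m;u_{m-1}) \le \alpha\,\mathcal{R}_{i,k,p}(u_{m-1}) = \mathcal{F}^c_\ve(u_{m-1};u_{m-2}) - \tfrac12\!\int \tfrac{(u_{m-1}-u_{m-2})^2}{w_{m-2}}\,d\mathbf{x} \le \mathcal{F}^c_\ve(u_{m-1};u_{m-2}).
\]
Reading off the ends of this chain gives both $(\mathcal{R}_{i,k,p}(u_m))_m$ and $(\mathcal{F}^c_\ve(u_m;u_{m-1}))_m$ monotonically decreasing, proving part 1. (One should note that all quantities are $\ge 0$, so both sequences are bounded below and converge to finite limits.)

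\textbf{Step 2 (telescoping for part 2).} From the displayed chain, for each $m$ one has the sharper inequality
\[
\tfrac12 \int_{\R^2} \frac{(u_m - u_{m-1})^2}{w_{m-1}}\,d\mathbf{x} \;\le\; \mathcal{F}^c_\ve(u_{m-1};u_{m-2}) - \mathcal{F}^c_\ve(u_m;u_{m-1}),
\]
obtained by keeping the data term instead of discarding it (equivalently: $\mathcal{F}^c_\ve(u_m;u_{m-1}) = \tfrac12\int (u_m-u_{m-1})^2/w_{m-1} + \alpha\mathcal{R}_{i,k,p}(u_m) \le \alpha\mathcal{R}_{i,k,p}(u_{m-1}) \le \mathcal{F}^c_\ve(u_{m-1};u_{m-2})$, then rearrange). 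Summing over $m=1,\dots,N$ telescopes the right-hand side to $\mathcal{F}^c_\ve(u_0;u_{-1}) - \mathcal{F}^c_\ve(u_N;u_{N-1})$, which, since $\mathcal{F}^c_\ve$ is bounded below by $0$ and the sequence is decreasing, stays bounded as $N\to\infty$. Therefore $\sum_{m=1}^\infty \int (u_m-u_{m-1})^2/w_{m-1}\,d\mathbf{x} < \infty$, and in particular $\int (u_m-u_{m-1})^2/w_{m-1}\,d\mathbf{x} \to 0$.

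\textbf{Step 3 (removing the weight).} Here is where the hypothesis $\sup_m \norm{\partial_1 u_m}_{L^\infty} = C < \infty$ enters: it gives $w_{m-1} = (\partial_1 u_{m-1})^2 + \varepsilon \le C^2 + \varepsilon$ uniformly in $m$, hence
\[
\norm{u_m - u_{m-1}}_{L^2}^2 = \int_{\R^2} (u_m-u_{m-1})^2\,d\mathbf{x} \le (C^2+\varepsilon)\int_{\R^2}\frac{(u_m-u_{m-1})^2}{w_{m-1}}\,d\mathbf{x} \longrightarrow 0,
\]
which is the claim of part 2.

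\textbf{Main obstacle.} The arithmetic above is routine; the one genuine subtlety is making Step 1 rigorous about \emph{existence and admissibility} of the minimizers $u_m$ — one must fix the function space (e.g.\ $u_{m-1} \in W^{k,p}$ with $\partial_1 u_{m-1} \in L^\infty$ so that $1/w_{m-1} \in L^\infty$ and the quadratic data term is well-defined and weakly lower semicontinuous), check coercivity of $\mathcal{F}^c_\ve(\cdot;u_{m-1})$ to run the direct method, and confirm that the minimizer again lies in that space so the iteration is well-posed. Once the functional setting is pinned down, convexity plus the nonnegativity of the data term make the rest a two-line telescoping argument; the $L^\infty$-bound hypothesis in part 2 is used \emph{only} in Step 3 to convert the weighted $L^2$ control into genuine $L^2$ control, and cannot be dispensed with since $1/w_{m-1}$ could otherwise be arbitrarily small on large regions.
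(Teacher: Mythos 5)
Your argument is essentially the paper's: part 1 uses the same chain $\alpha\mathcal{R}_{i,k,p}(u_m)\le\mathcal{F}^c_\ve(u_m;u_{m-1})\le\mathcal{F}^c_\ve(u_{m-1};u_{m-1})=\alpha\mathcal{R}_{i,k,p}(u_{m-1})\le\mathcal{F}^c_\ve(u_{m-1};u_{m-2})$, and part 2 converts the weighted data term into an $L^2$ bound via the uniform $L^\infty$ bound, exactly as in the paper. One index slip in Step 2 should be fixed: writing $D_m:=\tfrac12\int(u_m-u_{m-1})^2/w_{m-1}$ and $F_m:=\mathcal{F}^c_\ve(u_m;u_{m-1})$, your parenthetical chain $D_m+\alpha\mathcal{R}_{i,k,p}(u_m)\le\alpha\mathcal{R}_{i,k,p}(u_{m-1})\le F_{m-1}$ does \emph{not} rearrange to the displayed inequality $D_m\le F_{m-1}-F_m$ (that would require $\alpha\mathcal{R}_{i,k,p}(u_m)\ge F_m$, whereas in fact $F_m=D_m+\alpha\mathcal{R}_{i,k,p}(u_m)\ge\alpha\mathcal{R}_{i,k,p}(u_m)$, and the inequality as stated is not guaranteed). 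The correct version, which is the paper's, is $D_m=F_m-\alpha\mathcal{R}_{i,k,p}(u_m)\le F_m-F_{m+1}$, using $\alpha\mathcal{R}_{i,k,p}(u_m)=\mathcal{F}^c_\ve(u_m;u_m)\ge\mathcal{F}^c_\ve(u_{m+1};u_m)=F_{m+1}$; with this shift your telescoping (summability of $D_m$) goes through unchanged, and is in fact marginally stronger than the paper's conclusion, which only invokes convergence of the bounded decreasing sequence $(F_m)$ to get $F_m-F_{m+1}\to 0$. Your remarks on well-posedness correspond to the paper's appeal to properness, strict convexity and lower semicontinuity of $\mathcal{F}^c_\ve(\cdot;u_{m-1})$.
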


\begin{proof}
From the definition of $\mathcal{F}^c_\ve$ in \eqref{eq:convex_eps} it follows that $\mathcal{F}^c_\ve(u,u_m)$ is proper, 
strictly convex, and lower semi-continuous for every $m \in \N$. Thus there exists an unique minimizer $u_{m+1}$ minimizing $\mathcal{F}^c_\ve(u,u_m)$.

We are able to infer the following inequalities
\[0\leq \mathcal{F}^c_\ve(u_{m+1},u_m)\leq \mathcal{F}^c_\ve(u_m,u_m)=\alpha \mathcal{R}_{i,k,p}(u_m)\leq \mathcal{F}^c_\ve(u_m,u_{m-1}),\]
and
\[\mathcal{R}_{i,k,p}(u_{m+1})\leq \frac{ \mathcal{F}^c_\ve(u_{m+1},u_m)}{\alpha}\leq \frac{\mathcal{F}^c_\ve(u_m,u_m)}{\alpha}= \mathcal{R}_{i,k,p}(u_m),\]
which shows that both $(\mathcal{F}^c_\ve(u_m,u_{m-1}))_{m \in \N}$ and $(\mathcal{R}_{i,k,p}(u_m))_{m \in \N}$ are non-negative and monotonically decreasing.

For the second statement,
\begin{eqnarray*}
\frac{1}{2}\int_{\R^2} \frac{(u_m-u_{m-1})^2 }{\abs{\partial_1 u_{m-1}}^2+\ve}
&= &\mathcal{F}^c_\ve(u_m,u_{m-1}) -\mathcal{F}^c_\ve(u_m,u_m)\\
&\leq& \mathcal{F}^c_\ve(u_m,u_{m-1})-\mathcal{F}^c_\ve(u_{m+1},u_m).
\end{eqnarray*}
From the uniform boundedness assumption of $\set{u_m}$ it follows that
\[\frac{1}{2 (C^2 + \ve)} \norm{u_m-u_{m-1}}^2_{L^2}\leq \mathcal{F}^c_\ve(u_m,u_{m-1})-\mathcal{F}^c_\ve(u_{m+1},u_m).\]
Since the sequence $\set{\mathcal{F}^c_\ve(u_m,u_{m-1})}_{m \in \N}$ is bounded from below and monotonically decreasing
it follows that $\norm{u_m-u_{m-1}}_{L^2}\rightarrow 0$.
\hfill $\Box$
\end{proof}

Identifying $\Delta t = \alpha$, the formal optimality condition for \eqref{eq:convex_eps} 
characterizes the solution of \eqref{eq:lagged_ite} by
\begin{equation} \label{eq:explicit_flow}
 \left\{ \begin{aligned}
  \frac{u_m -u_{m-1}}{\Delta t} &= 
  (\abs{\partial_1 u_{m-1}}^2+\ve) \;(-1)^{k-1} \partial_i^k \left(\frac{\partial_i^k u_m}{\abs{\partial_i^k u_m}^{2-p}} \right),\\
  u_0 &=\ud.
 \end{aligned}\right.
\end{equation}

On the other hand, if $\partial_1 u(\mathbf{x})$ is relatively large,
the estimate of the displacement by the error measure in \eqref{eq:disp} is unrealistic, 
and a least squares error measure $(u(\mathbf{x})-\ud(\mathbf{x}))^2$ might be more efficient. 
To be able to compensate for relatively large and small displacement errors simultaneously, we, therefore, propose to use the 
geometric mean of the two error measures, which is
\begin{equation}
\label{eq:data1}
 \mathcal{D}_1(u,\ud):= \frac{1}{2}\int_{\R^2}\frac{(u(\mathbf{x})-\ud(\mathbf{x}))^2}{\abs{\partial_1 u(\mathbf{x})}}d\mathbf{x}.
\end{equation}
In this case we end up with the following variational model
\begin{equation}
\label{eq:non_convex_II}
\mathcal{F}(u;\ud) :=  \mathcal{D}_1(u,\ud) + \alpha \mathcal{R}_{i,k,p}(u).
\end{equation}
Again identifying $\Delta t = \alpha$, the associated optimality condition for the relaxed functional \eqref{eq:non_convex_II} is
\begin{equation} \label{eq:explicit_flow_II}
 \left\{ \begin{aligned}
  \frac{u_m -u_{m-1}}{\Delta t} &= 
  (\abs{\partial_1 u_{m-1}}+\ve) \;(-1)^{k-1} \partial_i^k \left(\frac{\partial_i^k u_m}{\abs{\partial_i^k u_m}^{2-p}} \right),\\
  u_0 &=\ud.
 \end{aligned}\right.
\end{equation}

\section{Nonlinear flows}
\label{sec:flows}
Let $i=1,2$, $k=1,2$, $p=1,2$, $q=1,2$ be fixed and assume that $\ud$ is given.
$u_m$ solving \eqref{eq:explicit_flow} (corresponding to $q=2$), \eqref{eq:explicit_flow_II} (corresponding to $q=1$) can be 
considered a numerical approximation of the flow
\begin{equation} \label{eq:formal_flow}
\left\{ \begin{aligned}
 \dot{u} &= (-1)^{k-1}\abs{\partial_1 u}^q \partial_i^k \left(\frac{\partial_i^k u}{\abs{\partial_i^k u}^{2-p}} \right)\quad 
 \text{ in } \R^2\times (0,\infty),\\
 u &=\ud \text{ in } \R^2  \times \set{0}
\end{aligned}\right.
\end{equation}
at time $t = m\Delta t$. Here $u=u(\mathbf{x},t)$ and $\dot{u}$ denotes the derivative of $u$ with respect to $t$. 
We also can consider \eqref{eq:formal_flow} as the flow according to the non-convex functional $\mathcal{F}$ (which is depended on $i$).

\begin{rem}
In practical simulations the unbounded domain $\R^2$ is replaced by $\Omega = (0,1)^2$ and the flow 
is associated with boundary conditions:
\begin{equation*}
 \begin{aligned}
  \partial_i^{2l-1} u& = 0,\text{ on }\set{0,1} \times (0,1)  , \text{ for all } l=1,..,k \text{ and for } i=1;\\  \partial_i^{2l-1} u & = 0,\text{ on } (0,1) \times \set{0,1}, \text{ for all } l=1,..,k \text{ and for } i=2.
 \end{aligned}
\end{equation*}

The case of $i=2$ has been considered in \cite{DonPatSchOek15}.
 
When $i=1$, the right hand side of \eqref{eq:formal_flow} only involves partial 
derivatives of $u$ with respect to $x_1$, such that it reduces to a system of independent 
equations defined for the functions $u(\cdot,x_2)$ for every $x_2 \in (0,1)$.
\end{rem}

\subsection{Properties of the flows}
In the following, we present some numerical simulations with \eqref{eq:formal_flow}.
\begin{itemize}
 \item For $i=1$ \eqref{eq:formal_flow} reads as follows
\begin{equation} \label{eq:flow1}
 \dot{u} = (-1)^{k-1}\abs{\partial_1 u}^q \partial_1^k \left(\frac{\partial_1^k u}{\abs{\partial_1^k u}^{2-p}} \right)\;.
\end{equation}

Figure \ref{fig:diffusion} shows numerical simulations of \eqref{eq:flow1} for different choices of $k$ and $p$ with $q=2$. 
In all test cases the initial data $\ud$ is a function representing a narrow white strip on a black background.
We visualize the solutions at $t=10^{-6}$ with identical time unit. 
We observe diffusion of the white strip for all choices 
of $k$ and $p$ except in the case $k=1, p=1$, with $q=2$.

We emphasis that when $k=1,p=2$ and $q=2$ \eqref{eq:flow1} reads as follows
\begin{equation}
\label{eq:flow1_2}
\dot{u} = \abs{\partial_1 u}^2 \partial_1^2 u =\frac{\partial_1 ((\partial_1 u)^3) }{3},
\end{equation}
of which the differential operator on the right-hand side is a one dimensional \emph{p-Laplacian} 
with $p=4$ (see \cite{Uhl77} for some regularity properties, where they have a general result for \emph{p-Laplacian} in $\R^n$). 
Hence the equation \eqref{eq:flow1_2} is nothing but a system of independent one dimensional \emph{p-Laplacian} flow,
of which the equilibrium (stationary point) is a function of the form $\tilde{u}=c_1(x_2) x_1+ c_2(x_2)$.
\begin{figure}
\begin{center}
\includegraphics[width=0.3\textwidth]{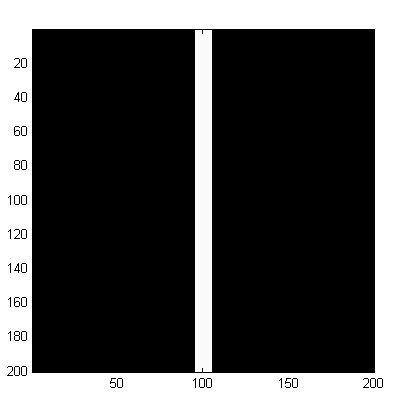}\qquad \includegraphics[width=0.3\textwidth]{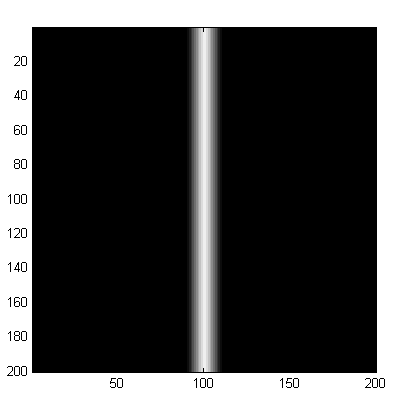}  \\
\includegraphics[width=0.3\textwidth]{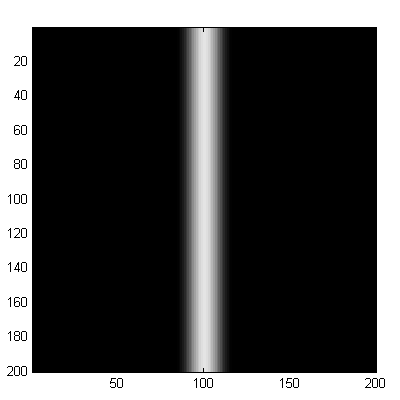} \includegraphics[width=0.3\textwidth]{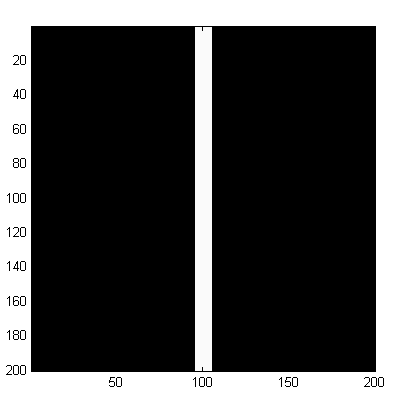} \includegraphics[width=0.3\textwidth]{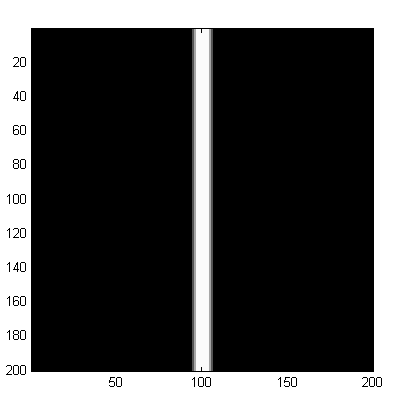} 
\end{center}
\caption{The solutions of the PDEs \eqref{eq:flow1} with $q=2$ at time $t=10^{-6}$. From top to bottom, and from left to right, the images are corresponding to the initial value $\ud$ and the results of the evolution with various parameters: $k=1, p=2$; $k=2, p=2$; $k=1, p=1$; $k=2, p=1$ in \eqref{eq:flow1}. We consider the $x_1$ lines are discretized with mesh size $\Delta x_1=0.1$.}
\label{fig:diffusion}
\begin{center}
\includegraphics[width=0.32\textwidth]{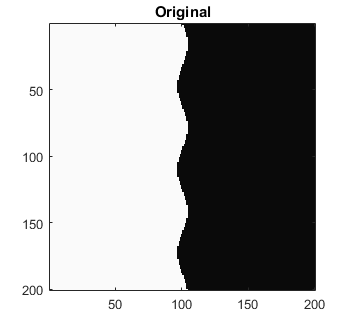}\qquad \includegraphics[width=0.32\textwidth]{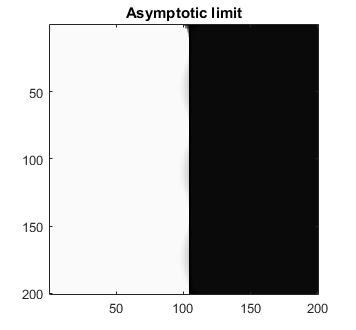}  \\
\includegraphics[width=0.32\textwidth]{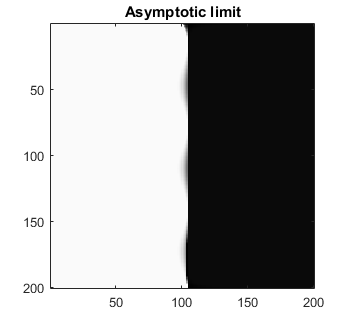} \includegraphics[width=0.32\textwidth]{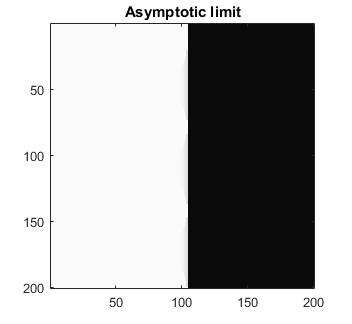} \includegraphics[width=0.32\textwidth]{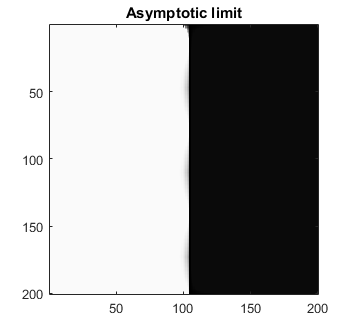}
\end{center}
\caption{The long time behaviors of the PDEs in \eqref{eq:flow2} with $q=2$. From top to bottom, and from left to right, the images are corresponding to the initial value $\ud$ and the results of the evolution with $q=2$ and various parameters: 
$k=1, p=2$; $k=2, p=2$; $k=1, p=1$; $k=2, p=1$ in \eqref{eq:flow2}. }
\label{fig:lineflow}
\end{figure}
\item
For $i=2$ \eqref{eq:formal_flow} reads as follows
\begin{equation} \label{eq:flow2}
 \dot{u} = (-1)^{k-1}\abs{\partial_1 u}^q \partial_2^k \left(\frac{\partial_2^k u}{\abs{\partial_2^k u}^{2-p}} \right).
\end{equation} 
We investigate the long time behavior of the solution of equation \eqref{eq:flow2} initialized with some curved 
interface data $\ud$. The numerical results are presented in Figure \ref{fig:lineflow}.
We find that the curved interface evolves into a vertical line (for all cases of $k$ and $p$, with $q=2$, we have tested), 
which we assume to be a general analytical property of the PDEs.
\end{itemize}

\subsection{Angular correction in tomography}
Now, we are applying the displacement correction methods for problems arising in tomography.
As data we consider angular disturbed sinograms of the Radon transform of 2D images and the final goal 
is to reconstruct the corresponding attenuation coefficients.
If the sinogram is recorded with angular perturbations, without further processing, application of the inverse Radon 
transform may produce outliers in the reconstruction (see Figure \ref{fig:phanton}). 
\begin{figure}
\includegraphics[width=0.5\textwidth]{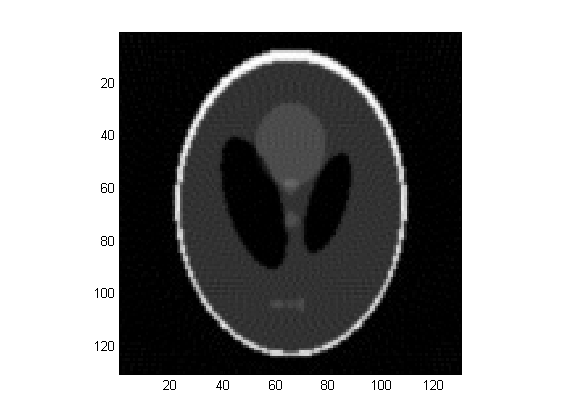} \includegraphics[width=0.5\textwidth]{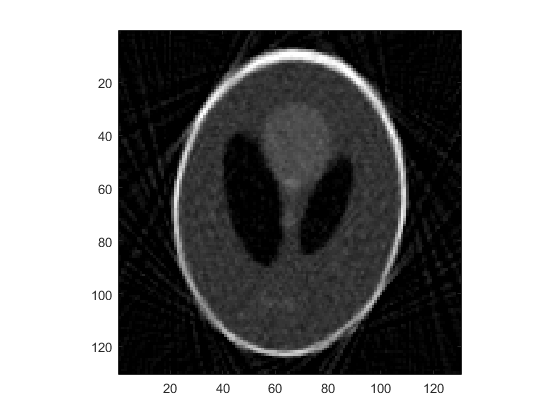}
\caption{Left: Phantom image. Right: A direct reconstruction with a filter backprojection (FBP) algorithm 
from an angular perturbed sinogram 
(see Figure \ref{fig:displacement}).}
\label{fig:phanton}
\end{figure}

As a test image we use the Shepp-Logan phantom of size $128 \times 128$, and discretize the angular axis of the sinogram 
domain $[0,\pi)$ with uniform step size $\frac{1}{90}\pi$. 
The synthetic data are generated by evaluating the Radon transform along the line in direction $\hat{\theta}=\theta+ d_1(\theta)$.
The (inverse) Radon transforms are implemented with the Matlab toolbox.

In the first series of experiments (see Figure \ref{fig:s_angular_correction}), 
we allow the random perturbations on the beam directions to be $d_1(\theta)\in [0,\frac{1}{30}\pi]$, 
which is relatively small.
In the example shown in Figure \ref{fig:s_angular_correction} these sampling errors do not cause 
significant mismatch in the reconstruction.
\begin{figure}
\includegraphics[width=0.5\textwidth]{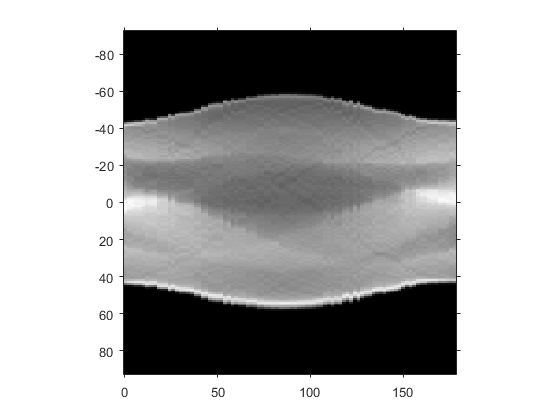} \includegraphics[width=0.5\textwidth]{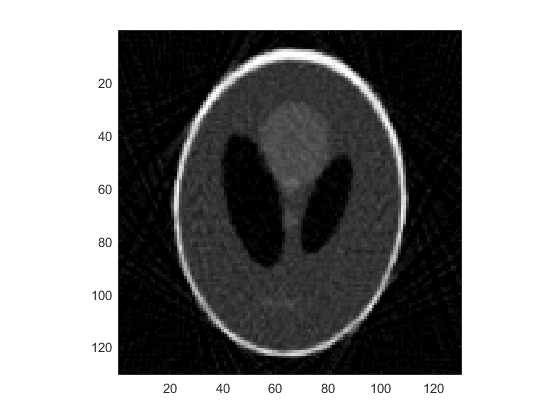}\\
\includegraphics[width=0.5\textwidth]{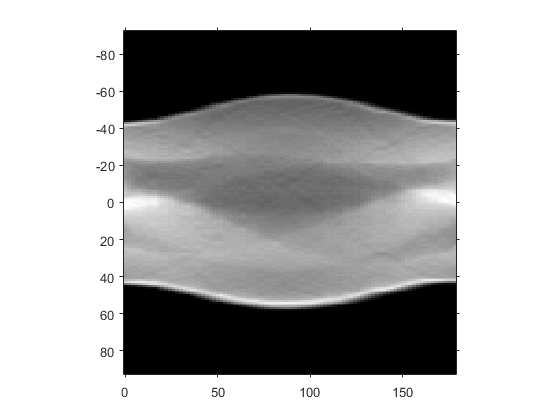}  \includegraphics[width=0.5\textwidth]{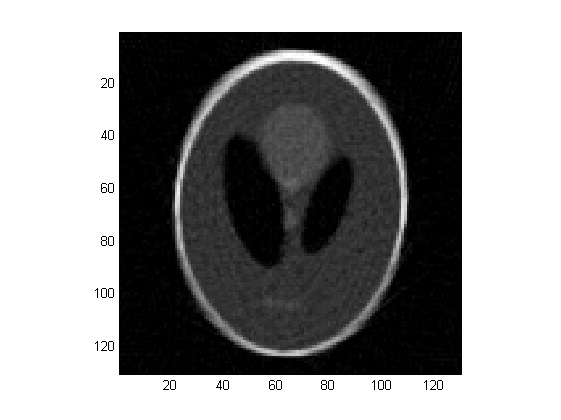} 
\caption{Top Left: Sinogram with relatively small angular perturbations ($d_1(\theta)\in [0,\frac{1}{30}\pi]$). Top Right: Direct reconstruction with a FBP algorithm.
Bottom: The applications of the flow \eqref{eq:flow1} for angular correction with parameter settings $k=1, p=2$ and $q=2$. 
}
\label{fig:s_angular_correction}
\includegraphics[width=0.5\textwidth]{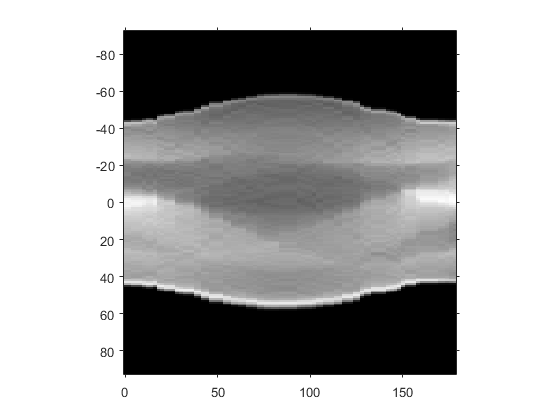} \includegraphics[width=0.5\textwidth]{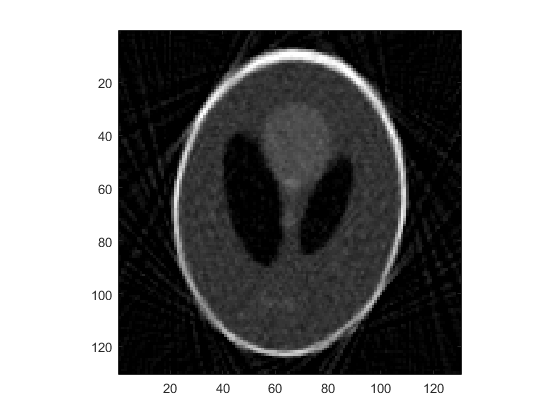}\\
\includegraphics[width=0.5\textwidth]{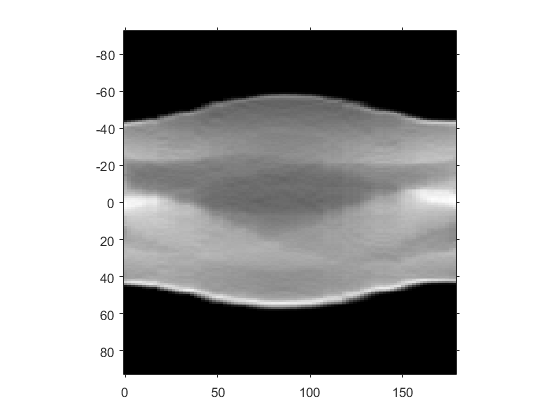}  \includegraphics[width=0.5\textwidth]{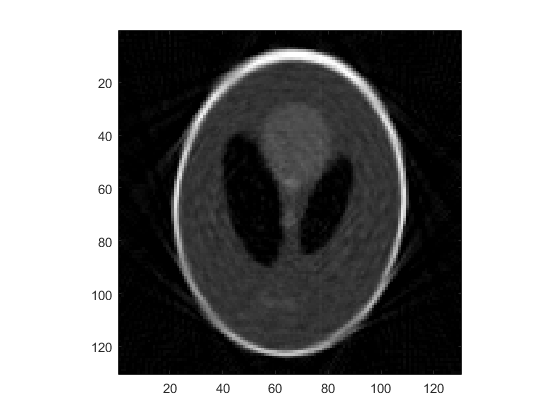} 
\caption{ Top: The results obtained with an heuristic discrete optimization algorithm method for solving \eqref{eq:problem1}.  
Bottom: The applications of the flow \eqref{eq:flow1a} for angular correction.}
\label{fig:l_angular_correction}
\end{figure}

Although the flows in \eqref{eq:flow1} are suitable for correcting small displacement errors,
they may not be very qualified for the data with larger displacement errors and with additive noise.
In the latter case, the recorded sinogram is considered to be
\begin{equation}\label{eq:angle_dis_noise}
y^{\delta}(\theta,l)=R[f](\theta+d_1(\theta),l)+\eta,
\end{equation}
where $\eta$ denotes some additive noise.
Here the filtering by the flow defined in \eqref{eq:formal_flow} with $q=1$ 
outperforms the filtering by the flow with $q=2$.
In the numerical experiments, we tested with an example of the angular perturbation $d_1(\theta)\in [0,\frac{1}{18}\pi]$ 
(see Figure \ref{fig:displacement}). For $q=1$ we show the filtering by \eqref{eq:formal_flow} with $k=1$ and $p=2$.
That is, we use the equation
\begin{equation} \label{eq:flow1a}
 \dot{u} = \abs{\partial_1 u} \partial_1^2 u,
\end{equation}
for filtering. 
Our numerical results are reported in Figure \ref{fig:l_angular_correction} and Figure \ref{fig:l_noisy_angular_correction},
which is based on the displacement error shown in Figure \ref{fig:displacement}. 
As a comparison, we also show the results when Problem \eqref{eq:problem1} is considered a discrete optimization problem and 
is solved with a heuristic discrete optimization algorithm, which is generalized from \cite{Nik09}. The right one on the top of Figure \ref{fig:l_angular_correction} 
shows the final result, having no preference against standard FBP.
The bottom images in Figure \ref{fig:l_angular_correction} show the results which are obtained by applying the nonlinear flow \eqref{eq:flow1a} 
for correcting the unknown angular perturbations.
In this example, the nonlinear flows have a better performance in comparing with the heuristic discrete optimization algorithm.
From the results shown in Figure \ref{fig:l_noisy_angular_correction}, 
we find that the proposed equation \eqref{eq:flow1a} is able to correcting the angle displacement error and denoise simultaneously. 
\begin{figure}
\includegraphics[width=0.5\textwidth]{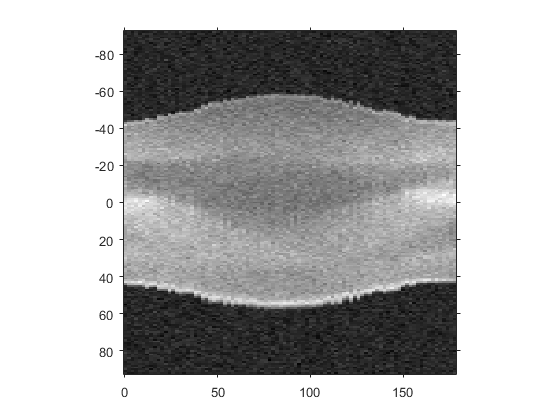}   \includegraphics[width=0.5\textwidth]{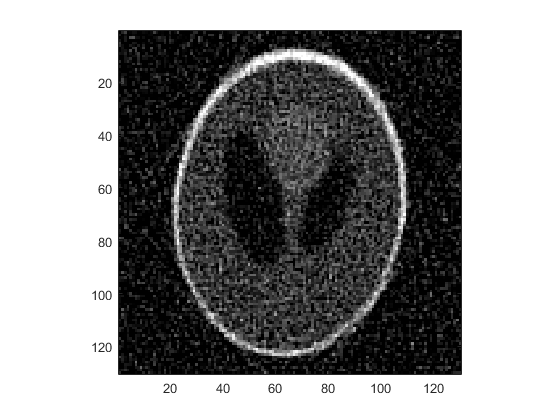} \\
\includegraphics[width=0.5\textwidth]{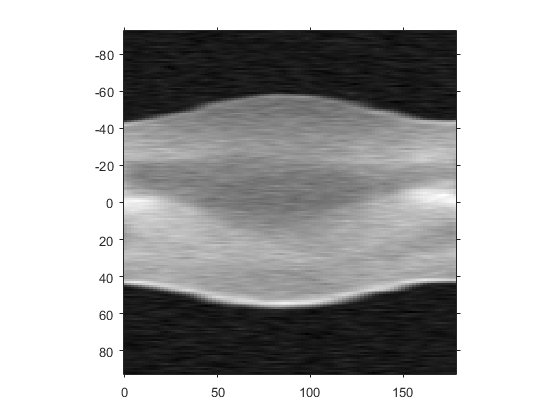} \includegraphics[width=0.5\textwidth]{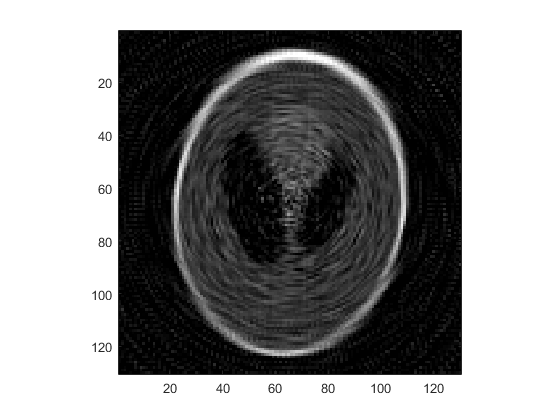} 
\caption{The applications of the flow \eqref{eq:flow1a} for angular correction from data containing both displacement and additive errors.}
\label{fig:l_noisy_angular_correction}
\end{figure}

\section{Conclusion}
In this paper, we have considered two families of nonlinear flows for correcting two different kinds of 
displacement errors. Our numerical analysis revealed interesting properties of the flows,
which should be confirmed by further theoretical studies in the future.
We have also presented some applications to tomography, where the novel PDEs are able to correct for 
angular displacement errors.
Some other application area of our methods are electron tomography in single particle analysis \cite{NatWue01,Oek15}.

\section*{Acknowledgements}
The work of OS has been supported by the Austrian Science Fund (FWF): Geometry and Simulation, project S11704 (Variational methods for imaging on manifolds), and Interdisciplinary Coupled Physics Imaging, project P26687-N25.

\bigskip


\begin{thebibliography}{10}
\bibitem{BurDelMar09}
R. ~Burkard, M. ~Dellamico, and S. Martello.
\newblock {\em Assignment Problems, revised reprint.}
\newblock Other titles in applied mathematics,
\newblock SIAM Philadelphia, 2009. Revised edition.

\bibitem{ChaMul99}
T.~Chan and P.~Mulet.
\newblock On the convergence of the lagged diffusivity fixed point method in
  total variation image restoration.
\newblock {\em SIAM J. Numer. Anal.}, 36(2):354--367, 1999.

\bibitem{DonPatSchOek15}
G.~Dong, A.R. Patrone, O.~Scherzer, and O.~{\"O}ktem.
\newblock Infinite dimensional optimization models and pdes for dejittering.
\newblock In {\em Scale Space and Variational Methods in Computer Vision 5th
  International Conference, SSVM 2015, L{\`e}ge-Cap Ferret, France, May 31 -
  June 4, 2015, Proceedings}, pages 678--689. Springer.

\bibitem{EvaSpr91}
L.~C. Evans and J.~Spruck.
\newblock Motion of level sets by mean curvature. {I}.
\newblock {\em J. Differential Geom.}, 33:635--681, 1991.

\bibitem{EvaSpr92}
L.~C. Evans and J.~Spruck.
\newblock Motion of level sets by mean curvature. {II}.
\newblock {\em Trans. Amer. Math. Soc.}, 330:321--332, 1992.

\bibitem{KanShe06}
S.~H. Kang and J.~Shen.
\newblock Video dejittering by bake and shake.
\newblock {\em Image Vision Comput.}, 24(2):143--152, 2006.

\bibitem{LenSch09}
F.~Lenzen and O.~Scherzer.
\newblock A geometric pde for interpolation of $m$-channel data.
\newblock In {\em SSVM '09: Proceedings of the Second International Conference
  on Scale Space and Variational Methods in Computer Vision}, pages 413--425,
  Berlin, Heidelberg, 2009. Springer-Verlag.

\bibitem{LenSch11}
F.~Lenzen and O.~Scherzer.
\newblock Partial differential equations for zooming, deinterlacing and
  dejittering.
\newblock {\em Int. J. Comput. Vision}, 92(2):162--176, April 2011.

\bibitem{Mun57}
J. ~Munkres.
\newblock Algorithms for assignment and transportation problems.
\newblock {\em J. Soc. Indu. Appl. Math.}, 5(1):32--38, Mar 1957.

\bibitem{NatWue01}
F. ~Natterer and F. ~W\"ubbeling.
\newblock {\em Mathematical Methods in Image Reconstruction.}
\newblock SIAM Monographs on Mathematical Modelling and Computation,
\newblock SIAM Philadelphia, 2001.

\bibitem{Nik09p}
M.~Nikolova.
\newblock Fast dejittering for digital video frames.
\newblock In X.C. Tai, M.~Knut, L.~Marius, and L.~Knut-Andreas, editors, {\em
  SSVM'09}, {\em LNCS}, pages 439--451. Springer Berlin
  Heidelberg, 2009.

\bibitem{Nik09}
M.~Nikolova.
\newblock One-iteration dejittering of digital video images.
\newblock {\em J. Vis. Commun. Image Represent.}, 20:254--274, 2009.

\bibitem{Oek15}
O.~\"Oktem.
\newblock Mathematics of Electron Tomography.
\newblock In O. Scherzer, editors, {\em Handbook of Mathematical Methods in Imaging}, pages 937--1031,
  2015. Springer New York.

\bibitem{Vos00}
S. ~Voss.
\newblock Heuristics for nonlinear assignment problems.
\newblock In P. M. Pardalos, and L. Pitsoulis, editors, {\em Nonlinear Assignment Problems, algorithms and applications}, Volume 7 of the series Combinatorial Optimization, pages 175--215,
  2000. Kluwer Academic Publishers.

\bibitem{She04}
J.~Shen.
\newblock Bayesian video dejittering by bv image model.
\newblock {\em SIAM J. Appl. Math.}, 64(5):1691--1708, 2004.

\bibitem{Uhl77}
K.~Uhlenbeck.
\newblock Regularity for a class of non-linear elliptic systems.
\newblock {\em Acta Math.}, 138(1):219--240, 1977.

\end{thebibliography}
\end{document}